\newcommand{\del}{\mbox{del}\,}
\newcommand{\lk}{\mbox{lk}\,}
\renewcommand{\dim}{\mbox{dim}\,}
\renewcommand{\dim}{\mbox{dim}\,}
\newcommand{\reg}{\mbox{reg}\,}
\newcommand{\T}{\mathrm}
\newtheorem{theorem}{Theorem}[section]
\newtheorem{corollary}[theorem]{Corollary}
\newtheorem{lemma}[theorem]{Lemma}
\newtheorem{proposition}[theorem]{Proposition}
\newtheorem{definition}[theorem]{Definition}
\newtheorem{example}[theorem]{Example}
\newtheorem{remark}[theorem]{Remark}
\numberwithin{equation}{section}
\begin{document}
\bibliographystyle{amsplain}

\title[Matchings in hypergraphs and Castelnuovo-Mumford regularity]{Matchings in hypergraphs and Castelnuovo-Mumford regularity}
\author[F. Khosh-Ahang and S. Moradi]{Fahimeh Khosh-Ahang$^*$ and Somayeh Moradi}
\address{Fahimeh Khosh-Ahang, Department of Mathematics,
 Ilam University, P.O.Box 69315-516, Ilam, Iran.}
\email{fahime$_{-}$khosh@yahoo.com}
\address{Somayeh Moradi, Department of Mathematics,
 Ilam University, P.O.Box 69315-516, Ilam, Iran and School of Mathematics, Institute
 for Research in Fundamental Sciences (IPM), P.O.Box: 19395-5746, Tehran, Iran.} \email{somayeh.moradi1@gmail.com}

\keywords{edge ideal, hypergraph, matching number, regularity, vertex
decomposable.\\
$*$Corresponding author}
\subjclass[2000]{Primary 13D02, 13P10;    Secondary 16E05}
\begin{abstract}

\noindent  In this paper, we introduce and generalize some combinatorial invariants of graphs such as matching number and induced matching number to hypergraphs. Then we compare them together and present some upper bounds for the regularity of Stanley-Reisner ring of $\Delta_{\mathcal{H}}$ for certain hypergraphs $\mathcal{H}$ in terms of the introduced matching numbers.
\end{abstract}

\maketitle
\section*{Introduction}

There is a natural correspondence between simplicial complexes and hypergraphs in the way that for a hypergraph $\mathcal{H}$, the faces
of the simplicial complex associated to it are the independent sets of vertices of $\mathcal{H}$, i.e. the sets which do not contain any edge of $\mathcal{H}$. This simplicial complex is called the \textbf{independence complex} of $\mathcal{H}$ and is denoted by $\Delta_{\mathcal{H}}$. Squarefree monomial ideals can be studied using these combinatorial ideas. Recently, edge ideals of graphs, as the easiest class of squarefree monomial ideals, has been studied by many researchers and some nice characterizations of the algebraic invariants, in terms of data from graphs, have been proved (cf. \cite{HD}, \cite{Kimura}, \cite{KM}, \cite{Mor}, \cite{VT}
and \cite{Zheng}). Extending the concepts in graphs to hypergraphs and finding more general results in hypergraphs, which will cover all
 squarefree monomial ideals, are of great interest and in some senses there are generalizations, see for example \cite{Emt}, \cite{HT1}, \cite{HW},  \cite{MVi} and  \cite{Wood}. The matchings are some graph invariants which are studied extensively (cf. \cite{LP}). In this paper we are going to extend some of them to hypergraphs.

The \textbf{Castelnuovo-Mumford regularity} (or simply regularity) of an $R$-module $M$ is defined as
$$\reg(M) := \max\{j-i |\
\beta_{i,j}(M)\neq 0\},$$
where $\beta_{i,j}(M)$ is the $(i,j)$th Betti number of $M$. Explaining the Castelnuovo-Mumford regularity of $R/I_{\Delta_{\mathcal{H}}}$ in terms of invariants of $\mathcal{H}$ has been studied extensively by many authors, where $I_{\Delta_\mathcal{H}}$ is the Stanley-Reisner ideal of the independence complex of the hypergraph $\mathcal{H}$. In the case that $\mathcal{H}$ is a graph, in certain circumstances, $\T{reg}(R/I_{\Delta_{\mathcal{H}}})$ is characterized precisely. For instance, in \cite{HT1}, \cite{KM} and \cite{VT}, respectively for chordal graph, $C_5$-free vertex decomposable graph and sequentially Cohen-Macaulay bipartite graph $G$, it was shown that $\reg(R/I(G))=c_G$, where $I(G)$ is the edge ideal of $G$ and $c_G$ is the induced matching number of $G$. Furthermore, combinatorial characterizations of the Castelnuovo-Mumford regularity of the edge ideal of hypergraphs has been subject of many works. Indeed, in \cite{HW}, the authors introduced the concept of
 2-collage in a simple hypergraph as a generalization of the matching number in graph and proved that
the Castelnuovo-Mumford regularity of the edge ideal of a simple hypergraph is bounded above
in terms of 2-collages. Also, Morey and Villarreal, in \cite{MVi}, gave a lower bound for the regularity of
the edge ideal of any simple hypergraph in terms of an induced matching of the hypergraph. Moreover, in \cite{HT1}, for $d$-uniform properly-connected hypergraphs a lower bound for the regularity is given. For more results see \cite{C,DHS,FV,N,WW}.

In this paper, we also study the regularity of the Stanley-Reisner ring of $\Delta_{\mathcal{H}}$ for some families of hypergraphs
 and relate it to some combinatorial concepts and generalize or improve some results, which had been gained for graphs, such as \cite[Theorem 6.7]{HT1} and \cite[Theorem 2.4]{KM}.

The paper proceeds as follows. After reviewing some hypergraph terminologies in the first section, in Section 2, we define an induced matching, a semi induced matching and matching number for a hypergraph $\mathcal{H}$, which we denote by $c_{\mathcal{H}}$, $c'_{\mathcal{H}}$ and $m_{\mathcal{H}}$, respectively and compare them together under different conditions. Also, we present a class of hypergraphs $\mathcal{H}$, consisting simple graphs, so that $c_{\mathcal{H}}=c'_{\mathcal{H}}$.

In the light of \cite[Corollary 3.9(a)]{MVi}, $c_\mathcal{H}$ is a lower bound for $\reg(R/I_{\Delta_{\mathcal{H}}})$, when $\mathcal{H}$ is a hypergraph. In Section 3, we are going to obtain some upper bounds for $\reg(R/I_{\Delta_{\mathcal{H}}})$ for a hypergraph $\mathcal{H}$.
As another class of hypergraphs, vertex decomposable hypergraphs has been studied and in Theorem \ref{reg}, it is proved that if a vertex
decomposable hypergraph $\mathcal{H}$ is $(C_2,C_5)$-free, then $\reg(R/I_{\Delta_{\mathcal{H}}})\leq c'_{\mathcal{H}}\leq \dim \Delta_{\mathcal{H}}+1$. This improves a result
on graphs proved in \cite{KM}, which states that for a $C_5$-free vertex decomposable graph $G$, $\reg(R/I(G))=c_G$.

\section{Review of hypergraph terminology}
In this section, we present some preliminaries in the context of hypergraphs from \cite{Berge} and \cite{Berge2}.

\begin{definition}
{\rm A \textbf{hypergraph} is a pair $(V,\mathcal{E}, I)$, where $V$ is a finite set of vertices, and $\mathcal{E} = \{E_i : i \in
I,  \emptyset\neq E_i \subseteq V \}$ is a collection of edges (or hyperedges). We will often abuse notation and refer
to $(V,\mathcal{E})$ as a hypergraph, with an understanding that the edges are indexed by
some set $I$. A hypergraph is called \textbf{$d$-uniform} if all of its edges have the same cardinality $d$. So, every simple graph is a $2$-uniform hypergraph.

Throughout this paper, we assume that $\mathcal{H}=(V(\mathcal{H}), \mathcal{E}(\mathcal{H}))$ is a \textbf{simple hypergraph}. That means that no element of $\mathcal{E}(\mathcal{H})$ contains another. A vertex of $\mathcal{H}$ is called \textbf{isolated} if it is not contained in any edge of $\mathcal{H}$.
}
\end{definition}

\begin{definition}
{\rm Assume that $\mathcal{H}$ is a hypergraph. For any vertex $x\in V(\mathcal{H})$, $\mathcal{H}\setminus x$ is a hypergraph with vertex set $V(\mathcal{H})\setminus\{x\}$ and  edge set $\{E\in \mathcal{E}(\mathcal{H}):\ x\notin E\}$.
Moreover $\mathcal{H}/x$ is a hypergraph with vertex set  $V(\mathcal{H})\setminus\{x\}$ whose edges are the non-empty minimal elements (with respect to inclusion) of the set $\{E\setminus \{x\} \ : \ E\in \mathcal{E}(\mathcal{H})\}$. It is clear that $\mathcal{H}\setminus x$ and $\mathcal{H}/x$ are two simple hypergraphs.
They are called \textbf{deletion} and \textbf{contraction} of $\mathcal{H}$ by $x$, respectively.}
\end{definition}
Note that for a vertex $x\in V(\mathcal{H})$, $\del_{\Delta_\mathcal{H}}(x)=\Delta_{\mathcal{H}\setminus x}$ and $\lk_{\Delta_{\mathcal{H}}}(x)=\Delta_{\mathcal{H}/x}$.

\begin{definition}
{\rm Given a hypergraph $\mathcal{H}$, there are some notions of induced subgraph. Although we need just two of them, but for completeness of the context we bring all of them here. Given a subset $A$
of vertices, a \textbf{subhypergraph} on $A$ is the hypergraph $$\mathcal{H}_A = (A, \{E_i \cap A  : E_i \cap A\neq \emptyset \}).$$
Note that the new index set for edges is $\{i \in I | E_i \cap A \neq \emptyset\}$. A \textbf{vertex section
hypergraph} on $A$ is the hypergraph $$\mathcal{H}\times A = (A, \{E_i : E_i \subseteq A\}).$$

Given a subset $J\subseteq I$, let $\mathcal{E}_J=\{E_j : j\in J\}$, we let $\mathcal{H}_J=(V, \mathcal{E}_J)$ denote the \textbf{partial hypergraph} and the \textbf{edge section hypergraph} $\mathcal{H}\times J$ is a hypergraph which has the edge set $\mathcal{E}_J$ and the vertex set $\bigcup_{j\in J}E_j$.
}
\end{definition}

\begin{example}
{\rm Let $V = \{1, 2, 3, 4\}$, and consider
a hypergraph $\mathcal{H}$ with edges $E_1 = \{1, 2, 3\}$ and $E_2 = \{2, 4\}$. Then the subhypergraph of $\mathcal{H}$
induced by the vertex set $A = \{2, 3, 4\}$ has edges $E_1 \cap A = \{2, 3\}$ and $E_2 = \{2, 4\}$, while
the vertex section hypergraph $\mathcal{H} \times A$ only has the edge $E_2 = \{2, 4\}$.

Consider the hypergraph $\mathcal{H}'$ on $V$ with edges $E_1'=\{1, 2\}$, $E'_2 = \{2, 3\}$ and $E_3'=\{3,4\}$. Then the partial hypergraph of $\mathcal{H}'$ induced by $E_1', E_2'$ has the vertex set $V$, while the edge section hypergraph of $\mathcal{H}'$ induced by $E_1', E_2'$ has the vertex set $\{1,2,3\}$.}
\end{example}

\begin{definition}
{\rm A \textbf{chain} in $\mathcal{H}$ is a sequence $v_0, E_1, v_1, \dots , E_k, v_k$, where $v_i \in E_i$ for $1\leq i \leq k, v_i \in E_{i+1}$ for
$0 \leq i \leq k-1$, and $E_1, \dots , E_k$ are edges $\mathcal{H}$. For our convenience, we denote this chain by $E_1, \dots, E_k$, if there is no ambiguity. If the edges are all distinct, we obtain
a \textbf{path} of length $k$. If $k > 2$ and $v_0 = v_k$, we call the path a \textbf{cycle} of length $k$ or a \textbf{$k$-cycle} and we denote it by $C_k$. We say that $\mathcal{H}$ is \textbf{$C_k$-free} if it doesn't contain any cycle $C_k$ as an edge section hypergraph.}
\end{definition}

\begin{definition}
{\rm A $d$-uniform hypergraph $\mathcal{H}$ is called \textbf{strongly connected} if for each distinct edges $E$ and $E'$, there is a chain $E=E_0, E_1, \dots, E_{k-1}, E_k=E'$ of edges of $\mathcal{H}$ such that for each $i:=0, 1, \dots, k-1$, $|E_i\cap E_{i+1}|=d-1$.}
\end{definition}

\section{Matching numbers of hypergraphs}

In this section, firstly, inspired by the definition of an induced matching in \cite{MVi}, we introduce the concepts of induced matching number and
 semi induced matching number of a hypergraph. Then we give some equalities and inequalities between these invariants.
\begin{definition}
{\rm A set $\{E_1, \dots, E_k\}$ of edges of a hypergraph $\mathcal{H}$ is called a \textbf{semi induced matching} if the only edges contained in $\bigcup_{\ell=1}^kE_\ell$ are $E_1, \dots, E_k$. A semi induced matching which all of its elements are mutually disjoint is called an \textbf{induced matching}. Also, we set
$$c_{\mathcal{H}}:=\max\{|\bigcup_{\ell=1}^kE_\ell|-k \ : \ \{E_1, \dots, E_k\} \ \T{is \ an \ induced \ matching \ in \ }\mathcal{H}\},$$
$$c'_{\mathcal{H}}:=\max\{|\bigcup_{\ell=1}^kE_\ell|-k \ : \ \{E_1, \dots, E_k\} \ \T{is \ a \ semi \ induced \ matching \ in \ }\mathcal{H}\},$$
and we call them \textbf{induced matching number} and \textbf{semi induced matching number} of $\mathcal{H}$, respectively.}
\end{definition}

The following theorem compares the invariants $c_{\mathcal{H}}$, $c'_{\mathcal{H}}$ and
$\T{dim}(\Delta_{\mathcal{H}})$ for an arbitrary hypergraph $\mathcal{H}$.

\begin{theorem}\label{dim}
For any hypergraph $\mathcal{H}$, we have the following inequalities.
$$c_{\mathcal{H}}\leq c'_{\mathcal{H}}\leq \T{dim}(\Delta_{\mathcal{H}})+1$$
\end{theorem}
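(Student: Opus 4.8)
The plan is to handle the two inequalities separately: the first is a formal consequence of the definitions, while the second rests on exhibiting a face of $\Delta_{\mathcal{H}}$ of the appropriate size.

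For $c_{\mathcal{H}}\leq c'_{\mathcal{H}}$, I would just note that every induced matching is in particular a semi induced matching, namely one whose edges happen to be pairwise disjoint. Thus the family of induced matchings is contained in the family of semi induced matchings, and since $c_{\mathcal{H}}$ and $c'_{\mathcal{H}}$ are the maxima of the same quantity $|\bigcup_{\ell=1}^{k}E_\ell|-k$ taken over the smaller and the larger family respectively, the inequality follows at once.

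The content of the theorem is the bound $c'_{\mathcal{H}}\leq \dim(\Delta_{\mathcal{H}})+1$. First I would recall that $\dim(\Delta_{\mathcal{H}})+1$ is exactly the largest cardinality of a face of $\Delta_{\mathcal{H}}$, that is, of an independent set of $\mathcal{H}$. Hence it is enough to show that each semi induced matching $\{E_1,\dots,E_k\}$ produces an independent set of size at least $|\bigcup_{\ell=1}^{k}E_\ell|-k$. Writing $W=\bigcup_{\ell=1}^{k}E_\ell$, the idea is to delete from $W$ one vertex of each edge: choose $t_\ell\in E_\ell$ for each $\ell$, form the set $T=\{t_1,\dots,t_k\}$, and set $S=W\setminus T$. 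Since $T\subseteq W$ and $|T|\leq k$, we get $|S|=|W|-|T|\geq |W|-k$.

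The remaining task is to check that $S$ is independent, and this is precisely where the defining property of a semi induced matching enters: any edge of $\mathcal{H}$ that is contained in $S$ would in particular be contained in $W$, hence would be one of $E_1,\dots,E_k$ by the semi induced matching hypothesis; but each $E_\ell$ contains $t_\ell\in T$ and therefore is not contained in $S$. So $S$ carries no edge, i.e. $S$ is a face of $\Delta_{\mathcal{H}}$, giving $|\bigcup_{\ell=1}^{k}E_\ell|-k\leq |S|\leq \dim(\Delta_{\mathcal{H}})+1$, and maximizing over all semi induced matchings yields the claim. The main point to get right, and the place where a naive approach could stall, is that one must \emph{not} insist that the vertices $t_\ell$ be distinct (a system of distinct representatives need not exist here); distinctness is irrelevant, since all that is needed is a transversal $T$ of $\{E_1,\dots,E_k\}$ of size at most $k$, which is produced trivially by picking one vertex per edge.
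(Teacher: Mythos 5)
Your proof is correct and takes essentially the same approach as the paper: the first inequality is dispatched identically, and for the second the paper also deletes one vertex per edge of the semi induced matching from the union $\bigcup_{\ell=1}^{k}E_\ell$ and uses the defining property of a semi induced matching to conclude the remainder is a face of $\Delta_{\mathcal{H}}$. The only cosmetic difference is that the paper builds its transversal iteratively (skipping an edge if it already meets the vertices chosen so far), whereas you pick one vertex from each edge unconditionally; both constructions produce a set of at most $k$ vertices meeting every $E_\ell$, which is all the argument requires.
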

\begin{proof}
It is clear that every induced matching of $\mathcal{H}$ is a semi induced matching. So, we have $c_{\mathcal{H}}\leq c'_{\mathcal{H}}$. To prove the last inequality, suppose that $\{E_1, \dots, E_k\}$ is a semi induced matching in $\mathcal{H}$ such that $c'_{\mathcal{H}}=|\bigcup_{\ell=1}^kE_\ell|-k$. Set $S_0=\emptyset$ and for each $1\leq i\leq k$, if $E_i\cap S_{i-1}\neq \emptyset$, then set $S_i=S_{i-1}$; else, choose a vertex $x_i\in E_i$ and set $S_i=S_{i-1}\cup \{x_i\}$. Now, consider the set $G=(\bigcup_{\ell=1}^kE_\ell)\setminus S_k$. We claim that $G$ is an independent set of vertices in $\mathcal{H}$.
By contrary, assume that $E\subseteq G$ for some $E\in \mathcal{E}(\mathcal{H})$. Then $E\cap S_k=\emptyset$ and $E\subseteq \bigcup_{\ell=1}^kE_\ell$. So $E=E_i$ for some $1\leq i\leq k$, since $\{E_1, \dots, E_k\}$ is a semi induced matching in $\mathcal{H}$. From the choice of $x_i$s, it is clear that $x_j\in E_i\cap S_k$ for some $1\leq j\leq i$, which is a contradiction.
Therefore, $G$ is contained in a facet $F$ of $\Delta_{\mathcal{H}}$. Since $|S_k|\leq k$, we have $c'_{\mathcal{H}}\leq |G|\leq |F|\leq \T{dim}(\Delta_{\mathcal{H}})+1$, which completes the proof.
\end{proof}

The following example illustrates that the  inequalities in Theorem \ref{dim} can be strict.
\begin{example}\label{ex}
{\rm Let $\mathcal{H}$ be a hypergraph with vertex set $V=\{x_1, \dots, x_6\}$ and edges $E_1=\{x_1, x_2, x_3\}, E_2=\{x_2, x_3, x_4\}$ and $E_3=\{x_4, x_5, x_6\}$. Then one can see that $c_{\mathcal{H}}=2$ and $c'_{\mathcal{H}}=3$. So $c_{\mathcal{H}}<c'_{\mathcal{H}}$.

Assume that $G$ is a star graph with vertex set $V=\{x_1, \dots, x_4\}$ and edges $\{x_1, x_2\}, \{x_1, x_3\}, \{x_1, x_4\}$. Then one can easily see that $c'_G=1$, but $\T{dim}(\Delta_G)=2$. So, even when $\mathcal{H}$ is a graph, the second inequality in Theorem \ref{dim} can be strict.}
\end{example}

\begin{remark}\label{2.4}
{\rm It is easily seen that when $\mathcal{H}$ is a graph, $c_{\mathcal{H}}$ is the well-known
induced matching number of $\mathcal{H}$; i.e. the maximum number of $3$-disjoint edges in $\mathcal{H}$. Also, it can be easily seen that for a hypergraph $\mathcal{H}$, a subset $\{E_1, \dots, E_k\}$ of edges of $\mathcal{H}$ is a semi induced matching if the edges of the vertex section hypergraph $\mathcal{H} \times \bigcup_{i=1}^k E_i$ are exactly $E_1, \dots, E_k$. So $c'_\mathcal{H}$ can be defined as
$$\max \{|V(\mathcal{K})|-|\mathcal{E}(\mathcal{K})| : \mathcal{K} \T{ \ is \ a \ vertex \ section \ hypergraph \ of \ } \mathcal{H} \T{ \ with \ no \ isolated \ vertex}\}.$$}
\end{remark}

In the following proposition, we provide conditions under which $c_{\mathcal{H}}= c'_{\mathcal{H}}$.
\begin{proposition}\label{MH}
Assume that $\mathcal{H}$ is a $d$-uniform hypergraph such that for each distinct edges $E$ and $E'$, $E\cap E'\neq \emptyset$ implies that $|E\cap E'|=d-1$. Then $c_{\mathcal{H}}= c'_{\mathcal{H}}$.
\end{proposition}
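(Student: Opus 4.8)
The plan is to lean on Theorem \ref{dim}, which already supplies $c_{\mathcal{H}}\leq c'_{\mathcal{H}}$, so that everything reduces to the reverse inequality $c'_{\mathcal{H}}\leq c_{\mathcal{H}}$. To prove it I would start from a semi induced matching $\{E_1,\dots,E_k\}$ realizing the maximum, i.e. with $|\bigcup_{\ell=1}^k E_\ell|-k=c'_{\mathcal{H}}$, and manufacture from it an \emph{induced} matching of value at least $c'_{\mathcal{H}}$, where by the value of a family $\{F_1,\dots,F_s\}$ I mean $|\bigcup_{i=1}^s F_i|-s$. The organizing device is the auxiliary graph $G$ on the vertex set $\{E_1,\dots,E_k\}$ in which $E_i$ and $E_j$ are joined exactly when $E_i\cap E_j\neq\emptyset$; by hypothesis each such intersection then has size $d-1$. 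Writing $\mathcal{C}_1,\dots,\mathcal{C}_r$ for the connected components of $G$ and $U_j=\bigcup_{E\in\mathcal{C}_j}E$, a one-line argument shows the $U_j$ are pairwise disjoint (a common vertex of $U_i$ and $U_j$ would join two edges lying in different components), whence $c'_{\mathcal{H}}=\sum_{j=1}^r\bigl(|U_j|-|\mathcal{C}_j|\bigr)$.

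The heart of the matter is the estimate $|U_j|\leq |\mathcal{C}_j|+d-1$ for every component, which I would prove by induction on the number $m=|\mathcal{C}_j|$ of edges in $\mathcal{C}_j$. The base case $m=1$ is just $|E|=d$. For the inductive step I would delete from $\mathcal{C}_j$ an edge corresponding to a non-cut vertex of the connected graph $G$ restricted to $\mathcal{C}_j$ (for instance a leaf of a spanning tree), so that the remaining $m-1$ edges still have connected intersection graph and, by induction, union of size at most $(m-1)+d-1$. Reinstating the deleted edge $E$, connectivity guarantees that $E$ meets some retained edge $E'$, and the hypothesis $|E\cap E'|=d-1$ forces $E$ to contribute at most $|E\setminus E'|=1$ new vertex, giving $|U_j|\leq m+d-1$. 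This is precisely where the $d$-uniform, $(d-1)$-intersection hypothesis is indispensable, and it is the step I expect to be the main obstacle: without it three edges meeting pairwise in fewer vertices can inflate $|U_j|$ and break the bound.

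Finally I would select one edge $e_j$ from each component $\mathcal{C}_j$ and verify that $\{e_1,\dots,e_r\}$ is an induced matching. Pairwise disjointness is immediate from the disjointness of the $U_j$. To see it is semi induced, suppose $E\in\mathcal{E}(\mathcal{H})$ satisfies $E\subseteq\bigcup_{j}e_j\subseteq\bigcup_{\ell}E_\ell$; the semi induced property of the original family makes $E=E_i$ for some $i$, and $E_i$ lies in a single $U_m$. Since $U_m$ meets $\bigcup_j e_j$ only in $e_m$ and $|E_i|=|e_m|=d$, we conclude $E=e_m$, one of the chosen edges. Thus $\{e_1,\dots,e_r\}$ is an induced matching of value $rd-r=r(d-1)$, and combining this with the estimate of the previous paragraph yields $c'_{\mathcal{H}}=\sum_{j}\bigl(|U_j|-|\mathcal{C}_j|\bigr)\leq r(d-1)\leq c_{\mathcal{H}}$, which together with Theorem \ref{dim} gives the desired equality.
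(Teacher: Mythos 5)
Your proof is correct, but it follows a genuinely different route from the paper's. The paper also reduces, via Theorem \ref{dim}, to showing $c'_{\mathcal{H}}\leq c_{\mathcal{H}}$, but then argues by induction on $k$, peeling off a single edge $E_i$ according to three cases: $E_i$ disjoint from the union of the others, $E_i$ meeting it properly (where the hypothesis forces $|E_i\cap\bigcup_{\ell\neq i}E_\ell|=d-1$, so deleting $E_i$ leaves the value unchanged), or $E_i$ contained in it, applying the inductive hypothesis to the remaining $k-1$ edges each time. You instead argue globally: decompose the optimal semi induced matching along the connected components of its intersection graph, prove the sharp per-component bound $|U_j|\leq|\mathcal{C}_j|+d-1$ by leaf-removal induction on a spanning tree, and then exhibit an explicit induced matching consisting of one edge per component, of value $r(d-1)$, which sandwiches $c'_{\mathcal{H}}\leq r(d-1)\leq c_{\mathcal{H}}$. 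Both arguments use the hypothesis in the same arithmetic way — an edge attached to an already-counted edge contributes at most $d-(d-1)=1$ new vertex, exactly offsetting its cost of $1$ — but your decomposition buys two things. First, a sharper structural conclusion: $c'_{\mathcal{H}}=c_{\mathcal{H}}=r(d-1)$, where $r$ is the number of components of an optimal semi induced matching, with the optimum attained by a one-edge-per-component induced matching. Second, robustness: in the paper's Case III (every $E_i$ contained in the union of the others), the inductive hypothesis is invoked for $\{E_1,\dots,E_{k-1}\}$, which need no longer be a semi induced matching (since $E_k$ lies inside its union), so the paper's induction as written has a subtle gap at that point; your component estimate is purely about unions of edges and connectivity, never requires sub-families to stay semi induced, and invokes semi-inducedness of the original family only once, at the very end, to verify that the selected edges $e_1,\dots,e_r$ form an induced matching.
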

\begin{proof}
In view of Theorem \ref{dim}, it is enough to show that $c'_{\mathcal{H}}\leq c_{\mathcal{H}}$. To this end, assume that $\{E_1, \dots, E_k\}$ is a semi induced matching in $\mathcal{H}$ such that $ |\bigcup_{\ell=1}^kE_\ell|-k=c'_\mathcal{H}$. It is sufficient to show that there is a subset $S$ of $\{1, \dots, k\}$ such that $\{E_\ell \ : \ \ell\in S\}$ is an induced matching in $\mathcal{H}$ and
$$ |\bigcup_{\ell=1}^kE_\ell|-k \leq|\bigcup_{\ell\in S}E_\ell|-|S|.$$
We use induction on $k$. The result is clear when $k=1$. So assume inductively that $k>1$ and the result is true for smaller values of $k$. We may consider the following cases.

\textbf{Case I.} Suppose that there is an integer $1\leq i\leq k$ such that $E_i\cap (\bigcup_{\ell =1,\ell\neq i}^k E_\ell)=\emptyset$. Then by inductive hypothesis, there is a subset $S$ of $\{1, \dots, i-1, i+1, \dots , k\}$ such that $\{E_\ell \ : \ \ell\in S\}$ is an induced matching in $\mathcal{H}$ and we have
$$ |\bigcup_{\ell=1, \ell\neq i}^kE_\ell|-(k-1) \leq |\bigcup_{\ell\in S}E_\ell|-|S|.$$
Now, set $S'=S\cup \{i\}$. It is obvious that $\{E_\ell \ : \ \ell\in S'\}$ is an induced matching in $\mathcal{H}$ and we have
\begin{align*}
|\bigcup_{\ell=1}^kE_\ell|-k&=|\bigcup_{\ell=1,\ell\neq i}^k E_\ell|-(k-1)+|E_i|-1\\
&\leq|\bigcup_{\ell\in S}E_\ell|-|S|+|E_i|-1\\
&=|\bigcup_{\ell\in S'}E_\ell|-|S'|
\end{align*}
as desired.

\textbf{Case II.} Suppose that there is an integer $1\leq i\leq k$ such that $0<|E_i\cap (\bigcup_{\ell=1, \ell\neq i}^kE_\ell)|< |E_i|.$ Then inductive hypothesis implies that there is a subset $S$ of $\{1, \dots, i-1, i+1, \dots , k\}$ such that $\{E_\ell \ : \ \ell\in S\}$ is an induced matching in $\mathcal{H}$ and
$$ |\bigcup_{\ell=1,\ell\neq i}^kE_\ell|-(k-1) \leq|\bigcup_{\ell\in S}E_\ell|-|S|.$$
On the other hand,  by our assumption on $\mathcal{H}$, we should have $|E_i\cap (\bigcup_{\ell=1,\ell\neq i}^kE_\ell)|=d-1$. Now, we have
\begin{align*}
|\bigcup_{\ell=1}^kE_\ell|-k&=|\bigcup_{\ell=1,\ell\neq i}^kE_\ell|-(k-1)+|E_i|-|E_i\cap (\bigcup_{\ell=1, \ell\neq i}^k E_\ell)|-1\\
&\leq|\bigcup_{\ell\in S}E_\ell|-|S|+d-(d-1)-1\\
&=|\bigcup_{\ell\in S}E_\ell|-|S|
\end{align*}
as desired.

\textbf{Case III.} Suppose that for each $1\leq i\leq k$, $E_i\subseteq \bigcup_{\ell=1, \ell\neq i}^kE_\ell$. Then by inductive hypothesis, there is a subset $S$ of $\{1, \dots, k-1\}$ such that $\{E_\ell \ : \ \ell\in S\}$ is an induced matching in $\mathcal{H}$ and $$ |\bigcup_{\ell=1}^{k-1}E_\ell|-(k-1) \leq|\bigcup_{\ell\in S}E_\ell|-|S|.$$
So, we have
\begin{align*}
|\bigcup_{\ell=1}^kE_\ell|-k&=|\bigcup_{\ell=1}^{k-1}E_\ell|-(k-1)-1\\
&\leq|\bigcup_{\ell\in S}E_\ell|-|S|-1\\
&\leq|\bigcup_{\ell\in S}E_\ell|-|S|
\end{align*}
as desired.
\end{proof}

Although by Remark \ref{2.4}, one may find out that for a graph $G$, $c_G=c'_G$, but this is an immediate consequence of Proposition \ref{MH} as follows.
\begin{corollary}
For a simple graph $G$, we have $c_G=c'_G$.
\end{corollary}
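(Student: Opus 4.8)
The plan is to exhibit $G$ as a special case of Proposition~\ref{MH} with $d = 2$. First I would observe that a simple graph is precisely a $2$-uniform hypergraph, since by definition every edge of $G$ has cardinality $2$. It then remains only to verify the intersection hypothesis of Proposition~\ref{MH}: for distinct edges $E, E'$ of $G$, the condition $E \cap E' \neq \emptyset$ should force $|E \cap E'| = d - 1 = 1$.

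This verification is immediate. The edges $E$ and $E'$ are distinct two-element subsets of $V(G)$, so they cannot share both of their vertices, as that would give $E = E'$; hence whenever $E \cap E' \neq \emptyset$ we necessarily have $|E \cap E'| = 1$. With the hypothesis confirmed, Proposition~\ref{MH} applies directly and yields $c_G = c'_G$.

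There is essentially no obstacle here: the entire content of the argument is the elementary remark that two distinct intersecting edges of a simple graph meet in exactly one vertex, which is automatic for sets of size $2$. This is precisely why the result can be stated as an immediate consequence of Proposition~\ref{MH} rather than requiring a separate inductive argument of its own.
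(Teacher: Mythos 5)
Your proof is correct and matches the paper's own argument: the paper likewise obtains this corollary as an immediate consequence of Proposition~\ref{MH}, since a simple graph is a $2$-uniform hypergraph in which any two distinct intersecting edges meet in exactly $d-1=1$ vertex. Nothing is missing; the verification of the intersection hypothesis is exactly the elementary observation you make.
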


The concept of matching number of a hypergraph is known as a generalization of one in graph theory (see \cite{Berge}). In fact, the maximum number of mutually disjoint edges of a hypergraph is called the matching number. H$\grave{a}$ and Van Tuyl in \cite{HT1} showed that when $\mathcal{H}$ is a graph, its matching number is an upper bound for $\T{reg}(R/I_{\Delta_{\mathcal{H}}})$. Here, by benefitting their work, we improve the definition of matching number of a hypergraph so that we can generalize this result to special class of hypergraphs (see Remark \ref{mH}). So, we present a new definition for matching number of a hypergraph as follows.

\begin{definition}(Compare \cite[Definition 6.6]{HT1}.)
{\rm  A set of edges of a hypergraph $\mathcal{H}$ is called a \textbf{matching} if they are pairwise disjoint. Also, we set
 $$m_{\mathcal{H}}:=\max\{|\bigcup_{\ell=1}^kE_\ell|-k \ : \ \{E_1, \dots, E_k\} \ \T{is \ a \ matching \ in \ }\mathcal{H}\},$$
and we call it the \textbf{matching number} of $\mathcal{H}$.}
\end{definition}
One can see that this definition is a natural generalization of one in graph theory, i.e. when $\mathcal{H}$ is a graph, $m_{\mathcal{H}}$ is the largest size of a maximal matching in $\mathcal{H}$. Furthermore, it is obvious that  $c_{\mathcal{H}}\leq m_{\mathcal{H}}$ for any hypergraph $\mathcal{H}$. Although, at one look, no relation can be seen between $c'_{\mathcal{H}}$ and $m_{\mathcal{H}}$, but Proposition \ref{MH} shows that $c'_G\leq m_G$, for special class of hypergraphs consisting simple graphs.  Note that the mentioned condition in Proposition \ref{MH} is different from the property of \emph{strongly connected} for hypergraphs.

\section{Regularity of edge ideal of certain hypergraphs}
In this section, we show that for a hypergraph $\mathcal{H}$, the introduced invariants in Section 2 give bounds for $\T{reg}(R/I_{\Delta_{\mathcal{H}}})$ and for some families of hypergraphs we give the precise amount of $\T{reg}(R/I_{\Delta_{\mathcal{H}}})$ in terms of these numbers. We begin by the following remark.

\begin{remark}\label{mH}
{\rm Morey and Villarreal in \cite{MVi} showed that $c_{\mathcal{H}}$ is a lower bound for $\T{reg}(R/I_{\Delta_{\mathcal{H}}})$ for a simple hypergraph $\mathcal{H}$. Hereafter, we are trying to find circumstances under which $c'_{\mathcal{H}}$ or $m_{\mathcal{H}}$ is an upper bound for $\T{reg}(R/I_{\Delta_{\mathcal{H}}})$. Note that in the light of \cite[Theorem 6.7]{HT1}, $m_{\mathcal{H}}$ is an upper bound for $\T{reg}(R/I_{\Delta_{\mathcal{H}}})$, where $\mathcal{H}$ is a simple graph. But we may have this result for more hypergraphs. In this regard, recall that a subset $C$ of the edges of a hypergraph $\mathcal{H}$ is called a \textbf{2-collage} for $\mathcal{H}$ if for each edge $E$ of $\mathcal{H}$ we can delete a vertex $v$ so that $E\setminus \{v\}$ is contained in some edge of $C$. Hence if $\mathcal{H}$ is a $d$-uniform hypergraph such that for each distinct edges $E$ and $E'$, $E\cap E'\neq \emptyset$ implies that $|E\cap E'|=d-1$, one can easily see that any maximal matching in $\mathcal{H}$ is a 2-collage. So, in view of \cite[Corollary 3.9(a)]{MVi} and \cite[Theorem 1.2]{HW}, one can have
$$c_{\mathcal{H}}\leq \T{reg}(R/I_{\Delta_{\mathcal{H}}})\leq m_{\mathcal{H}}.$$}
\end{remark}

As a main result of this paper, we are going to show that $c'_{\mathcal{H}}$ is an upper bound for $\T{reg}(R/I_{\Delta_{\mathcal{H}}})$ for a certain class of hypergraphs. To this end, we need to recall the following definition.

\begin{definition}
{\rm Let $\Delta$ be a simplicial complex on the vertex set $V =
\{x_1,\ldots, x_n\}$. Then $\Delta$ is \textbf{vertex decomposable}
if either:

1) The only facet of $\Delta$ is $\{x_1,\ldots, x_n\}$, or
$\Delta=\emptyset$.

2) There exists a vertex $x\in V$ such that $\del_{\Delta}(x)$ and
$\lk_{\Delta}(x)$ are vertex decomposable, and such that every facet
of $\del_{\Delta}(x)$ is a facet of $\Delta$.}
\end{definition}

A vertex $x\in V$ for which every facet
of $\del_{\Delta}(x)$ is a facet of $\Delta$ is called a
\textbf{shedding vertex} of $\Delta$. Note that this is equivalent to say that no facet of $\lk_{\Delta}(x)$ is a facet of
$\del_{\Delta}(x)$.

A hypergraph $\mathcal{H}$ is called vertex decomposable, if the independence
complex $\Delta_{\mathcal{H}}$ is vertex decomposable and a vertex of $\mathcal{H}$ is called a shedding vertex if it is a shedding vertex of $\Delta_{\mathcal{H}}$. It is easily seen that if $x$ is a shedding vertex of $\mathcal{H}$ and $\{E_1, \dots, E_k\}$ is the set of all edges of $\mathcal{H}$ containing $x$, then every facet of $\mathcal{H}\setminus x$ contains $E_i\setminus \{x\}$ for some $1\leq i\leq k$.

For our main result we also need to illustrate the relations between $c'_{\mathcal{H}}$, $c'_{\mathcal{H}\setminus x}$ and $c'_{\mathcal{H}/x}$ for a vertex $x$ of $\mathcal{H}$. Note that it is obvious that $c_{\mathcal{H}\setminus x}\leq c_{\mathcal{H}}$ and $c'_{\mathcal{H}\setminus x}\leq c'_{\mathcal{H}}$. Now, suppose that $\{E_1\setminus\{x\}, \dots, E_k\setminus\{x\}\}$ is a semi induced matching in $\mathcal{H}/x$ such that  $c'_{\mathcal{H}/x}=|\bigcup_{\ell=1}^k (E_\ell\setminus\{x\})|-k$. The following example shows that it is not necessarily true that $\{E_1, \dots, E_k\}$ is a semi induced matching in $\mathcal{H}$.

\begin{example}
{\rm Let $\mathcal{H}$ be a hypergraph with $V(\mathcal{H})=\{x_1, \dots, x_5\}$ and $\mathcal{E}(\mathcal{H})=\{E_1=\{x_1, x_2, x_3\}, E_2=\{x_2, x_3, x_4\}, E_3= \{x_4, x_5\}\}$. Then $\mathcal{E}(\mathcal{H}/x_1)=\{E_1\setminus\{x_1\}, E_3\setminus \{x_1\}\}$. It is clear that $\{E_1\setminus\{x_1\}, E_3\setminus \{x_1\}\}$ is a semi induced matching in $\mathcal{H}/x_1$ but $\{E_1, E_3\}$ is not a semi induced matching in $\mathcal{H}$.}
\end{example}

Now, the following two lemmas provide conditions under which we can get to a semi induced matching in $\mathcal{H}$ from one in $\mathcal{H}/x$, for a vertex $x$ of $\mathcal{H}$.

\begin{lemma}\label{1}
Assume that $\mathcal{H}$ is a $C_2$-free hypergraph, $x$ is a vertex of $\mathcal{H}$
 and $k$ is the smallest integer such that there exists a semi induced matching $\{E_1\setminus\{x\}, \dots, E_k\setminus\{x\}\}$ in $\mathcal{H}/x$ so that
 $c'_{\mathcal{H}/x}=|\bigcup_{\ell=1}^k (E_\ell\setminus\{x\})|-k$. Then $\{E_1, \dots, E_k\}$ is a semi induced matching in $\mathcal{H}$ and so if $x\in E_i$ for some $1\leq i\leq k$, we have $c'_{\mathcal{H}/x}+1\leq c'_{\mathcal{H}}$.
\end{lemma}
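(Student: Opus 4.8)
The plan is to first reduce the displayed inequality to the structural claim, and then prove the structural claim that $\{E_1,\dots,E_k\}$ is a semi induced matching in $\mathcal{H}$, which is the real content. Throughout I read the hypothesis ``$C_2$-free'' in its only sensible form here: any two distinct edges of $\mathcal{H}$ meet in at most one vertex. Write $F_\ell=E_\ell\setminus\{x\}$, $U=\bigcup_{\ell=1}^k E_\ell$ and $U'=\bigcup_{\ell=1}^k F_\ell$, and note $U'=U\setminus\{x\}$.

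Granting that $\{E_1,\dots,E_k\}$ is a semi induced matching, the inequality is a one-line vertex count: if $x\in E_i$ for some $i$, then $U=U'\cup\{x\}$ with $x\notin U'$, so $|U|=|U'|+1$, whence $c'_{\mathcal{H}}\geq |U|-k=(|U'|-k)+1=c'_{\mathcal{H}/x}+1$. So everything rests on the structural claim, and I would attack it by contradiction. Suppose there is an edge $E\in\mathcal{E}(\mathcal{H})$ with $E\subseteq U$ and $E\notin\{E_1,\dots,E_k\}$. The case $E=\{x\}$ is excluded at once: by simplicity no $E_\ell$ (being $\neq\{x\}$) then contains $x$, so $x\notin U$, contradicting $E\subseteq U$. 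Otherwise $E\setminus\{x\}$ is a nonempty member of $\{E'\setminus\{x\}:E'\in\mathcal{E}(\mathcal{H})\}$ contained in $U'$, so it contains a minimal element, i.e. an edge $F$ of $\mathcal{H}/x$; since $\{F_1,\dots,F_k\}$ is a semi induced matching in $\mathcal{H}/x$ and $F\subseteq U'$, necessarily $F=F_j$ for some $j$.

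Thus $F_j\subseteq(E\setminus\{x\})\cap E_j\subseteq E\cap E_j$ with $E\neq E_j$, and the $C_2$-free hypothesis forces $|E\cap E_j|\leq 1$, hence $F_j=\{y\}$ is a single vertex. If $x\notin E_j$ then $E_j=\{y\}\subseteq E$ and simplicity gives $E=E_j$, a contradiction; so $x\in E_j$, i.e. $E_j=\{x,y\}$. This remaining singleton case is the crux, and it is exactly where the minimality of $k$ enters. First I would record a general fact: if $F_j=\{y\}$ is a singleton of the matching and $y\notin\bigcup_{\ell\neq j}F_\ell$, then $\{F_\ell:\ell\neq j\}$ is again a semi induced matching in $\mathcal{H}/x$, of value $(|U'|-1)-(k-1)=|U'|-k=c'_{\mathcal{H}/x}$ but with only $k-1$ edges, contradicting the minimality of $k$. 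Hence $y\in F_{j''}$ for some $j''\neq j$. But then $F_{j''}\neq\{y\}=F_j$ while $\{y\}\subsetneq F_{j''}$, so the edge $F_{j''}$ of $\mathcal{H}/x$ properly contains the edge $F_j$ of $\mathcal{H}/x$ — impossible, since the edges of $\mathcal{H}/x$ are by definition the minimal elements of $\{E'\setminus\{x\}\}$. This contradiction finishes the claim.

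I expect the singleton case to be the main obstacle. It is precisely the configuration that fails for a non-minimal lift (a two-element edge $\{x,y\}$ whose contraction is the singleton $\{y\}$, swallowed by a larger edge through $y$), so the argument genuinely needs to play the two different minimality conditions against one another: the minimality of $k$ prevents a stray singleton from being dropped for free, while the minimality built into the definition of the edges of $\mathcal{H}/x$ prevents that same singleton from being covered by another matching edge. Both facts fall out of the definitions once the setup above is in place, so beyond this case I anticipate only routine bookkeeping with $\setminus\{x\}$, unions, and the semi induced matching property.
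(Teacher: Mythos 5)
Your proof is correct and follows essentially the same route as the paper's: both locate an edge of $\mathcal{H}/x$ inside the offending edge $E$, use $C_2$-freeness to force it to be a singleton $E_j\setminus\{x\}$ with $x\in E_j$, and then play the minimality of $k$ (an uncovered singleton could be dropped, preserving the value $c'_{\mathcal{H}/x}$ with $k-1$ edges) against the simplicity of $\mathcal{H}/x$ (a covered singleton would be a matching edge properly contained in another). The only difference is organizational: the paper argues directly via three cases (whether $x\in E$ and whether $E$ is an edge of $\mathcal{H}/x$), concluding $E=E_i$ in the first two, while you fold everything into one proof by contradiction.
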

\begin{proof}
Suppose that there is an edge $E$ of $\mathcal{H}$ such that $E\subseteq\bigcup_{\ell=1}^k E_\ell$. Then $E\setminus \{x\}\subseteq\bigcup_{\ell=1}^k (E_\ell\setminus \{x\})$. Now, we have three cases:

\textbf{Case I.} If $x\in E$, then $E\setminus \{x\}=E_i\setminus \{x\}$, for some $1\leq i\leq k$. If $x\not\in E_i$, then $E$ strictly contains $E_i$ which is a contradiction. So, $x\in E_i$ and hence $E=E_i$ as desired.

\textbf{Case II.} If $x\not\in E$ and $E$ is an edge of $\mathcal{H}/x$, then $E=E_i\setminus \{x\}$, for some $1\leq i\leq k$. If $x\in E_i$, then $E_i$ strictly contains $E$ which is a contradiction. So, $x\not\in E_i$ which implies that $E=E_i$ as desired.

\textbf{Case III.} If $x\not\in E$ and $E$ is not an edge of $\mathcal{H}/x$, then there is an edge $E'$ of $\mathcal{H}$ containing $x$ such that $E'\setminus \{x\}\subset E$ and $E'\setminus \{x\}$ is an edge of $\mathcal{H}/x$. So, $E\cap E'=E'\setminus\{x\}$. Since $\mathcal{H}$ is $C_2$-free, $|E'\setminus\{x\}|=1$. Since $E'\setminus \{x\}\subseteq \bigcup_{\ell=1}^k (E_\ell\setminus \{x\})$, then $E'\setminus \{x\}=E_i\setminus \{x\}$ for some $1\leq i\leq k$.
Thus $|E_i\setminus\{x\}|=1$. Moreover, $E_i\setminus\{x\}\nsubseteq \bigcup_{\ell=1,\ell\neq i}^k (E_\ell\setminus \{x\})$, since otherwise
$E_i\setminus\{x\}\subseteq E_j\setminus\{x\}$ for some $j\neq i$, which is impossible. Therefore, $\{E_\ell\setminus \{x\}, 1\leq \ell\leq k, \ell\neq i\}$
is a semi inducing matching in $\mathcal{H}/x$ and $|\bigcup_{\ell=1,\ell\neq i}^k (E_\ell\setminus\{x\})|-(k-1)=|\bigcup_{\ell=1}^k (E_\ell\setminus\{x\})|-1-(k-1)=c'_{\mathcal{H}/x}$, which contradicts to our assumption on $k$. So this case can't occur.

Hence, $\{E_1, \dots, E_k\}$ is a semi induced matching in $\mathcal{H}$. Now, if $x\in E_i$ for some $1\leq i\leq k$, we have $$c'_{\mathcal{H}/x}=|\bigcup_{\ell=1}^k (E_\ell\setminus\{x\})|-k=|\bigcup_{\ell=1}^k E_\ell|-k-1\leq c'_{\mathcal{H}}-1,$$
which completes the proof.
\end{proof}

\begin{lemma}\label{2}
Assume that $\mathcal{H}$ is a $(C_2, C_5)$-free hypergraph, $x$ is a shedding vertex of $\mathcal{H}$
 and $\{E_1\setminus\{x\}, \dots, E_k\setminus\{x\}\}$ is a semi induced matching in $\mathcal{H}/x$ such that $x\not\in E_\ell$ for all $1\leq \ell \leq k$. Then there is an edge $F$ of $\mathcal{H}$ containing $x$ such that $\{E_1, \dots, E_k, F\}$ is a semi induced matching in $\mathcal{H}$. Moreover,
 $c'_{\mathcal{H}/x}+1\leq c'_{\mathcal{H}}$.
\end{lemma}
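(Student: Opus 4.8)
The plan is to reduce the statement to the construction and verification of a single edge $F$, and then to extract that edge from the shedding hypothesis. Throughout write $U:=\bigcup_{\ell=1}^{k}E_\ell$. Since $x\notin E_\ell$ for every $\ell$ we have $x\notin U$ and $\bigcup_{\ell}(E_\ell\setminus\{x\})=U$, so (taking the given matching to realize $c'_{\mathcal{H}/x}$, as we may for the second assertion) we have $c'_{\mathcal{H}/x}=|U|-k$. The only arithmetic needed for the "moreover" is this: once I produce an edge $F\ni x$ such that $\{E_1,\dots,E_k,F\}$ is a semi induced matching in $\mathcal{H}$ and such that $F\not\subseteq U\cup\{x\}$, then $U\cup F$ acquires at least two new vertices over $U$ (namely $x$ and a vertex of $F\setminus\{x\}$ outside $U$), whence $c'_{\mathcal{H}}\ge|U\cup F|-(k+1)\ge(|U|+2)-(k+1)=(|U|-k)+1=c'_{\mathcal{H}/x}+1$. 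So the entire content lies in constructing and checking $F$.

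To build $F$ I would first apply the transversal construction from the proof of Theorem \ref{dim} to the matching $\{E_1,\dots,E_k\}$ inside $\mathcal{H}/x$, deleting from each $E_\ell$ a chosen representative; this yields an independent set $G\subseteq U$ of $\mathcal{H}/x$ with $|U\setminus G|\le k$ and with no $E_\ell$ surviving inside $G$. As $G$ is independent in $\mathcal{H}/x$ it is independent in $\mathcal{H}\setminus x$, so I may enlarge it to a facet $\sigma$ of $\mathcal{H}\setminus x$. Because $x$ is a shedding vertex, every facet of $\mathcal{H}\setminus x$ contains $E\setminus\{x\}$ for some edge $E\ni x$; hence $\sigma\supseteq F\setminus\{x\}$ for some edge $F$ of $\mathcal{H}$ with $x\in F$, and this is my candidate. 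One checks $F\setminus\{x\}\not\subseteq U$: otherwise $F\setminus\{x\}$ would be a nonempty subset of $U$ and would therefore contain one of the $E_\ell$ (the only edges of $\mathcal{H}/x$ inside $U$), and that $E_\ell\subseteq\sigma$ would contradict independence of $\sigma$. Moreover $\sigma\cup\{x\}\supseteq F$ is dependent, so $\sigma$ is in fact a maximal independent set of $\mathcal{H}$, a fact I will exploit below. This secures the extra vertex demanded in the first paragraph.

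The crux is checking that no edge $E\ne E_1,\dots,E_k,F$ satisfies $E\subseteq U\cup F$. If $x\in E$, then since $E\ne F$ and $\mathcal{H}$ is $C_2$-free the two edges through $x$ meet only in $x$, forcing $E\setminus\{x\}\subseteq U$; a minimal member of $\mathcal{H}/x$ below $E\setminus\{x\}$ then gives some $E_\ell\subsetneq E$, contradicting simplicity of $\mathcal{H}$. The essential case is $x\notin E$: here $E\subseteq U\cup(F\setminus\{x\})$, $E$ is not contained in $U$ (else $E$ is one of the $E_\ell$, again by simplicity), and $E$ is not contained in $\sigma$ (which is independent), so $E$ meets $F\setminus\{x\}\subseteq\sigma$ in a vertex $a$ and meets $U\setminus\sigma\subseteq U\setminus G$ in a vertex lying in some $E_i$. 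I would then trace the chain $x,F,a,E,\dots$ into $E_i$ and attempt to return to $x$, using $C_2$-freeness to force every pairwise intersection of the edges involved to be a single vertex (so that the degenerate short closures become forbidden $C_2$'s) and using the maximality of $\sigma$ to supply the edges needed to close the chain. The outcome I would aim for is that a minimal such configuration closes into a $5$-cycle, contradicting $C_5$-freeness of $\mathcal{H}$.

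The main obstacle is precisely this final cycle analysis: arranging the choice of $F$ and $\sigma$ so that any violating edge $E$ is forced into a short cycle, and then controlling the length so that it lands on the forbidden value $5$ rather than on an allowed $C_3$ or $C_4$. The delicate bookkeeping — tracking which vertices of $E$ are representatives lying outside $\sigma$, which lie in $G\subseteq\sigma$, and which closing edges the maximality of $\sigma$ provides — is where essentially all the difficulty concentrates, and the two forbidden-cycle hypotheses $C_2$ and $C_5$ are exactly the combinatorial inputs that make it succeed.
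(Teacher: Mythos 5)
Your opening two paragraphs are sound: the arithmetic reduction of the ``moreover'' part to producing an edge $F\ni x$ with $F\setminus\{x\}\not\subseteq U$ such that $\{E_1,\dots,E_k,F\}$ is semi induced, the extraction of a candidate $F$ from a facet $\sigma$ of $\Delta_{\mathcal{H}\setminus x}$ via the shedding property, and the disposal of violating edges containing $x$ via $C_2$-freeness and simplicity all check out. But the proof stops exactly at its essential step: the case of a violating edge $E$ with $x\notin E$ is never carried out; you only describe the cycle analysis you ``would aim for''. And this gap cannot be closed for your construction, because the implication you are aiming at (violating edge $\Rightarrow$ forbidden $C_5$) is false for it. Let $\mathcal{H}$ be the tree on $\{x,a,b,u_1,u_2\}$ with edges $\{x,a\},\{x,b\},\{a,u_1\},\{u_1,u_2\}$. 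It is $(C_2,C_5)$-free (it is a forest), and $x$ is a shedding vertex: the facets of $\Delta_{\mathcal{H}\setminus x}$ are $\{a,b,u_2\}$ and $\{b,u_1\}$, both facets of $\Delta_{\mathcal{H}}$. Take $k=1$ and $E_1=\{u_1,u_2\}$, a semi induced matching in $\mathcal{H}/x$ with $x\notin E_1$, so $U=\{u_1,u_2\}$. Your transversal construction may choose the representative $u_1$, giving $G=\{u_2\}$, whose only facet extension in $\Delta_{\mathcal{H}\setminus x}$ is $\sigma=\{a,b,u_2\}$; then both $\{x,a\}$ and $\{x,b\}$ qualify as your candidate (each satisfies $F\setminus\{x\}\subseteq\sigma$ and $F\setminus\{x\}\not\subseteq U$). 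For the choice $F=\{x,a\}$, the edge $E=\{a,u_1\}$ satisfies $E\subseteq U\cup F$, so $\{E_1,F\}$ is not semi induced in $\mathcal{H}$ --- yet $\mathcal{H}$ contains no cycle at all, so no appeal to $C_5$-freeness can ever produce a contradiction. The lemma survives only because the other candidate, $\{x,b\}$, happens to work; nothing in your recipe distinguishes it.

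The paper's proof is organized globally precisely to get around this. It assumes, for contradiction, that \emph{every} edge $F_i$ containing $x$ admits a violating edge $F_i'$ (so $F_i'\notin\{E_1,\dots,E_k,F_i\}$, $x\notin F_i'$, $F_i'\cap F_i\neq\emptyset$ and $F_i'\setminus F_i\subseteq U$), and sets $S=\bigcup_i(F_i'\setminus F_i)$. The $C_5$ hypothesis enters only there: if $S$ were dependent in $\mathcal{H}/x$, some $E_\ell$ would have to meet two distinct violating edges $F_i'$ and $F_j'$, and the five edges $E_\ell,F_i',F_i,F_j,F_j'$ close into a forbidden $5$-cycle through $x$, with $C_2$-freeness keeping the relevant intersections singletons. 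Then $S$ is extended to a facet of $\Delta_{\mathcal{H}/x}$, which the violating edges force to also be a facet of $\Delta_{\mathcal{H}\setminus x}$, contradicting that $x$ is a shedding vertex. Note that the forbidden configuration needs \emph{two} distinct edges through $x$ together with their two violating edges, joined at $x$ on one side and at a common $E_\ell$ on the other; with a single pre-chosen $F$ and a single violating edge you only ever have a three-edge path, which is exactly why, as the example shows, no contradiction is available in your setup. The fix is not finer bookkeeping in the cycle analysis, but replacing your local choice of $F$ by the paper's global argument over all edges containing $x$.
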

\begin{proof}
Let $\{F_1, \dots, F_s\}$ be the set of all edges containing $x$ and suppose, in contrary, that for each $F_i$, there is an edge $F_i'$ of $\mathcal{H}$ such that $F_i'\not\in \{E_1, \dots, E_k, F_i\}$, $F_i'\cap F_i\neq\emptyset$ and $F_i'\setminus F_i\subseteq \bigcup_{\ell=1}^kE_\ell$. Note that if $F_i'=F_j$ for some $1\leq j\leq s$, then $F_j\setminus F_i\subseteq\bigcup_{\ell=1}^kE_\ell$. Since $\mathcal{H}$ is $C_2$-free, then $F_i\cap F_j=\{x\}$ and $F_j\setminus F_i=F_j\setminus \{x\}$. So $F_j\setminus \{x\}\subseteq\bigcup_{\ell=1}^kE_\ell$. This is a contradiction, since $F_j\setminus \{x\}\in \mathcal{E}(\mathcal{H}/x)$.

Also, note that for each distinct integers $1\leq i,j\leq s$, $F_i'\neq F_j'$. Because, if we have $F_i'=F_j'$ for some distinct integers $1\leq i,j\leq s$, then we should have $F_i'\setminus (F_i\cap F_j)\subseteq \bigcup_{\ell=1}^kE_\ell$. On the other hand, we know that $x\not\in F_i'$ and since $\mathcal{H}$ is $C_2$-free, $F_i\cap F_j=\{x\}$. Hence, $F_i'\subseteq \bigcup_{\ell=1}^kE_\ell$. So there exists an edge $E\setminus\{x\}\in \mathcal{E}(\mathcal{H}/x)$ such that $E\setminus\{x\}\subseteq F'_i\subseteq \bigcup_{\ell=1}^kE_\ell$, which is a contradiction.

Moreover, note that for each distinct integers $1\leq i,j\leq s$, $F_j\cap F'_j\not\subseteq F'_i\setminus F_i$, because otherwise since $F'_i\setminus F_i$ and $F'_j\setminus F_j$ are contained in $\bigcup_{\ell=1}^k E_\ell$, we should have $F'_j\subseteq \bigcup_{\ell=1}^k E_\ell$, which is a contradiction. Hence, $E_\ell\cap F'_i\neq F_j\cap F'_j$ for all $1\leq \ell\leq k$.

Now, set $S=\bigcup_{i=1}^s(F_i'\setminus F_i)$. At first, we are going to show that $S$ is an independent set of vertices in $\mathcal{H}/x$. Suppose, in contrary, that $S$ is not independent. Then, since $S\subseteq \bigcup_{\ell=1}^kE_\ell$ and $\{E_1, \dots, E_k\}$ is a semi induced matching in $\mathcal{H}/x$, there should exist an $E_\ell$ which intersects with two distinct edges $F_i'$ and $F_j'$. So, since $\mathcal{H}$ is $C_2$-free, $E_\ell - F_i' - F_i - F_j - F_j' - E_\ell$ forms a subhypergraph $C_5$ in $\mathcal{H}$ which is a contradiction. Thus, $S$ is an independent set of vertices in $\mathcal{H}/x$.
We extend $S$ to a facet $G$ of $\Delta_{\mathcal{H}/x}$. $G$ is also a facet of $\Delta_{\mathcal{H}\setminus x}$; because otherwise $G$ is contained in a facet $K$ of $\Delta_{\mathcal{H}\setminus x}$. Now, since $x$ is a shedding vertex, $K$ contains $F_i\setminus \{x\}$ for some $1\leq i\leq s$. Hence, $F'_i\subseteq K$, because $F'_i\setminus F_i\subseteq S\subseteq G\subseteq K$ and $x\not\in F'_i$. This is a contradiction, since $F'_i\in \mathcal{E}(\mathcal{H} \setminus x)$.
So we found a facet of $\Delta_{\mathcal{H}/x}$, which is a facet of $\Delta_{\mathcal{H}\setminus x}$. But this contradicts to the fact that $x$ is a shedding vertex.
 So, we proved that $\{E_1, \dots, E_k, F_i\}$ is a semi induced matching in $\mathcal{H}$, for some edge $F_i$ containing $x$. Now, let
  $c'_{\mathcal{H}/x}=|\bigcup_{\ell=1}^k(E_\ell\setminus\{x\})|-k$. Since $F_i\setminus \{x\}\nsubseteq \bigcup_{\ell=1}^kE_\ell$, $c'_{\mathcal{H}/x}+1\leq |(\bigcup_{\ell=1}^kE_\ell)\cup F_i|-(k+1)\leq c'_{\mathcal{H}}$ as required.
\end{proof}

Now, we are ready to state our main result of this section.
\begin{theorem}\label{reg}
Let $\mathcal{H}$ be a $(C_2,C_5)$-free vertex decomposable hypergraph.Then
$$\T{reg}(R/I_{\Delta_{\mathcal{H}}})\leq c'_{\mathcal{H}}\leq \T{dim}(\Delta_{\mathcal{H}})+1.$$
\end{theorem}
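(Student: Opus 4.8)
The second inequality $c'_{\mathcal{H}}\leq \dim(\Delta_{\mathcal{H}})+1$ is exactly Theorem \ref{dim}, so the real content is the bound $\reg(R/I_{\Delta_{\mathcal{H}}})\leq c'_{\mathcal{H}}$. The plan is to prove this by induction on $|V(\mathcal{H})|$, with the engine being the standard short exact sequence attached to a variable $x$,
$$0\to (R/(I_{\Delta_{\mathcal{H}}}:x))(-1)\to R/I_{\Delta_{\mathcal{H}}}\to R/(I_{\Delta_{\mathcal{H}}},x)\to 0.$$
After identifying $(I_{\Delta_{\mathcal{H}}}:x)$ with $I_{\lk_{\Delta_{\mathcal{H}}}(x)}=I_{\Delta_{\mathcal{H}/x}}$ and $(I_{\Delta_{\mathcal{H}}},x)$ with $I_{\del_{\Delta_{\mathcal{H}}}(x)}+(x)=I_{\Delta_{\mathcal{H}\setminus x}}+(x)$, the usual regularity estimate from this sequence (using $\reg(N(-1))=\reg(N)+1$) reads
$$\reg(R/I_{\Delta_{\mathcal{H}}})\leq \max\{\reg(R/I_{\Delta_{\mathcal{H}\setminus x}}),\ \reg(R/I_{\Delta_{\mathcal{H}/x}})+1\}.$$

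For the base case, if $\mathcal{H}$ has no edges then $\Delta_{\mathcal{H}}$ is a simplex, $I_{\Delta_{\mathcal{H}}}=0$, and both $\reg(R/I_{\Delta_{\mathcal{H}}})$ and $c'_{\mathcal{H}}$ vanish. For the inductive step I would use vertex decomposability to select a shedding vertex $x$; by the definition of vertex decomposability $\del_{\Delta_{\mathcal{H}}}(x)=\Delta_{\mathcal{H}\setminus x}$ and $\lk_{\Delta_{\mathcal{H}}}(x)=\Delta_{\mathcal{H}/x}$ are again vertex decomposable, so $\mathcal{H}\setminus x$ and $\mathcal{H}/x$ are vertex decomposable hypergraphs on fewer vertices. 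I then need to check that the class of $(C_2,C_5)$-free hypergraphs is closed under deletion and contraction. Since deletion merely discards edges and contraction replaces each edge $E$ by the minimal members of $E\setminus\{x\}$, the mutual intersection of any two edges can only shrink, so $C_2$-freeness (no two edges meeting in at least two vertices) persists; and a cycle $C_5$ appearing after deletion or contraction lifts to one on the original edges, since each $A_i=E_i\setminus\{x\}$ satisfies $A_i\subseteq E_i$ and the vertices witnessing the cyclic adjacencies avoid $x$, so the $E_i$ realize the same $C_5$ in $\mathcal{H}$, a contradiction. Hence the induction hypothesis applies to both $\mathcal{H}\setminus x$ and $\mathcal{H}/x$, giving $\reg(R/I_{\Delta_{\mathcal{H}\setminus x}})\leq c'_{\mathcal{H}\setminus x}$ and $\reg(R/I_{\Delta_{\mathcal{H}/x}})\leq c'_{\mathcal{H}/x}$.

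Combining the two displayed facts yields
$$\reg(R/I_{\Delta_{\mathcal{H}}})\leq \max\{c'_{\mathcal{H}\setminus x},\ c'_{\mathcal{H}/x}+1\}.$$
Here $c'_{\mathcal{H}\setminus x}\leq c'_{\mathcal{H}}$ is immediate, as a semi induced matching of a deletion is one of $\mathcal{H}$. For the remaining term I invoke the two matching lemmas: choosing the smallest $k$ for which a semi induced matching $\{E_1\setminus\{x\},\dots,E_k\setminus\{x\}\}$ realizes $c'_{\mathcal{H}/x}$, either some $E_i$ contains $x$, whence Lemma \ref{1} gives $c'_{\mathcal{H}/x}+1\leq c'_{\mathcal{H}}$, or no $E_\ell$ contains $x$, whence the shedding hypothesis together with Lemma \ref{2} produces an edge $F\ni x$ extending the matching and again gives $c'_{\mathcal{H}/x}+1\leq c'_{\mathcal{H}}$. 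In either case the right-hand maximum is at most $c'_{\mathcal{H}}$, completing the induction.

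The step I expect to be delicate is not the homological estimate, which is routine, but the interface with the combinatorial invariant: the matching realizing $c'_{\mathcal{H}/x}$ need not lift to a semi induced matching in $\mathcal{H}$ (as the example preceding Lemma \ref{1} shows), so the minimality-of-$k$ device and the shedding property are essential, and it is precisely here that $(C_2,C_5)$-freeness is consumed through Lemmas \ref{1} and \ref{2}. Correspondingly, the one point demanding genuine care is verifying that $(C_2,C_5)$-freeness survives contraction, since that is what lets the induction hypothesis be applied to $\mathcal{H}/x$ at all.
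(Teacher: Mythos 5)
Your proposal is correct and follows essentially the same route as the paper's own proof: the second inequality is delegated to Theorem \ref{dim}, and the first is proved by induction on $|V(\mathcal{H})|$ using a shedding vertex $x$, the regularity estimate $\reg(R/I_{\Delta_{\mathcal{H}}})\leq \max\{\reg(R/I_{\Delta_{\mathcal{H}\setminus x}}),\ \reg(R/I_{\Delta_{\mathcal{H}/x}})+1\}$, and Lemmas \ref{1} and \ref{2} to bound the right-hand side by $c'_{\mathcal{H}}$. Your explicit check that $(C_2,C_5)$-freeness passes to $\mathcal{H}\setminus x$ and $\mathcal{H}/x$ is a point the paper asserts without proof, so including it is a welcome (but not structurally different) addition.
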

\begin{proof}
In the light of Theorem \ref{dim} it is enough to prove $\T{reg}(R/I_{\Delta_{\mathcal{H}}})\leq c'_{\mathcal{H}}$. In this regard, we use induction on $|V(\mathcal{H})|$. If $|V(\mathcal{H})|=2$, the result is clear. Suppose, inductively, that the result has been proved for smaller values of $|V(\mathcal{H})|$. Assume that $x$ is a shedding vertex of $\mathcal{H}$. Let $\Delta=\Delta_{\mathcal{H}}$, $\Delta_1=\Delta_{\mathcal{H}\setminus x}$ and $\Delta_2=\Delta_{\mathcal{H}/x}$. Then $\mathcal{H}\setminus x$ and $\mathcal{H}/x$ are $(C_2,C_5)$-free vertex decomposable hypergraphs and no facet of $\Delta_2$ is a facet of $\Delta_1$.
 By inductive hypothesis we have
$$\T{reg}(R/I_{\Delta_1})\leq c'_{\mathcal{H}\setminus x} \ \T{and} \ \T{reg}(R/I_{\Delta_2})\leq c'_{\mathcal{H}/x}.$$
On the other hand, we have the inequality
$$\T{reg}(R/I_{\Delta})\leq\max\{\T{reg}(R/I_{\Delta_1}),\T{reg}(R/I_{\Delta_2})+1\}.$$
Hence
$$\T{reg}(R/I_{\Delta})\leq \max\{c'_{\mathcal{H}\setminus x},c'_{\mathcal{H}/x}+1\}.$$
Now, the result immediately follows from Lemmas \ref{1} and \ref{2}.
\end{proof}

\begin{example}
{\rm Assume that $d\geq 3$ and $\mathcal{H}$ is a $d$-uniform simple hypergraph with vertex set $$V(\mathcal{H})=\bigcup_{i=1}^k \bigcup_{j=1}^{d-1}\{x_{i,j}\}\cup \{x\},$$
and edge set
$$\mathcal{E}(\mathcal{H})=\{E_i=\{x_{i,1}, \dots, x_{i,d-1}, x\} : 1\leq i\leq k\}.$$
one may find out that the only 2-collage of $\mathcal{H}$ is $\{E_1, \dots, E_k\}$. So, in the light of \cite[Theorem 1.2]{HW}, we have
$$\T{reg}(R/I_{\Delta_{\mathcal{H}}})\leq k(d-1).$$
On the other hand, One can easily check that $\mathcal{H}$ is a $(C_2,C_5)$-free vertex decomposable hypergraph and $\{E_1, \dots, E_k\}$ is the semi induced matching of $\mathcal{H}$ such that $c'_\mathcal{H}=|\bigcup_{i=1}^k E_i|-k$. Hence, by Theorem \ref{reg}, we have
$$\T{reg}(R/I_{\Delta_{\mathcal{H}}})\leq k(d-1)+1-k.$$}
\end{example}

The above example illustrates that for $d\geq 3$ and large enough values of $k$, the upper bound on $\T{reg}(R/I_{\Delta_{\mathcal{H}}})$ presented in \cite[Theorem 1.2]{HW} in this special case is much larger than the actual value of $\T{reg}(R/I_{\Delta_{\mathcal{H}}})$, and our upper bound in Theorem \ref{reg} is better than one given in \cite[Theorem 1.2]{HW}. Of course note that the proof of Lemma 3.4 in \cite{HW} has some flaws, and so the proof of Theorem 1.2 in \cite{HW} will be uncertain. The following example shows this defect.

\begin{example}
{\rm Assume that $\mathcal{H}$ is a hypergraph with vertex set $V=\{a,b,c,d,e,f\}$ and edge set $\{\{a,b\}, \{a,c\}, \{d,f\}, \{e,f\}, \{b,c,d,e\}\}$. With the notations in \cite{HW}, for each edge $E$ of $\mathcal{H}$, let $\mathcal{H}_E$ be the hypergraph whose edge set consists of the minimal (under inclusions) members of $\{E\cup E' : E'\neq E \T{ \ is \ an \ edge \ of \ } \mathcal{H}\}$. So, by considering $E=\{a,b\}$, the edge set of $\mathcal{H}_E$ is $\{\{a,b,c\},\{a,b,d,f\}, \{a,b,e,f\} \}$. Now, one can easily see that $\{E_0=\{b,c,d,e\}\}$ is a 2-collage for $\mathcal{H}$ but $\{E\cup E_0\}$ is not even an edge of $\mathcal{H}_E$ and it doesn't contain any 2-collage of $\mathcal{H}_E$. Also, for each choice of $E\neq E_0$, the above assertion holds. This shows the mentioned defect of the proof of \cite[Lemma 3.4]{HW}.}
\end{example}

Since $c_{\mathcal{H}}=c'_{\mathcal{H}}$, in conjunction with Theorem \ref{reg} and \cite[Corollary 3.9(a)]{MVi}, can characterize the regularity of the Stanley-Reisner ring $R/I_{\Delta_{\mathcal{H}}}$ precisely, this question arises that when the equality $c_{\mathcal{H}}=c'_{\mathcal{H}}$ holds? In the light of Remark \ref{mH}, the similar question can be asked about the equality $c_{\mathcal{H}}=m_{\mathcal{H}}$. With this point of view, \cite[Corollary 3.9(a)]{MVi},
Proposition \ref{MH}, Remark \ref{mH} and Theorems \ref{dim} and \ref{reg} imply the next corollary.
\begin{corollary}(Compare \cite[Theorem 1.9]{KM}.)
\begin{itemize}
\item[(i)] Assume that $\mathcal{H}$ is a $(C_2,C_5)$-free vertex decomposable hypergraph such that $c_{\mathcal{H}}=\T{dim}(\Delta_{\mathcal{H}})+1$. Then
$$\T{reg}(R/I_{\Delta_{\mathcal{H}}})= c_{\mathcal{H}}=c'_{\mathcal{H}}=\T{dim}(\Delta_{\mathcal{H}})+1.$$
\item[(ii)] If $\mathcal{H}$ is a $(C_2,C_5)$-free vertex decomposable hypergraph such that $c_{\mathcal{H}}=c'_{\mathcal{H}}$, then
$$\T{reg}(R/I_{\Delta_{\mathcal{H}}})= c_{\mathcal{H}}.$$
\item[(iii)] In particular, if $G$ is a simple $C_5$-free vertex decomposable graph, then
$$\T{reg}(R/I_{\Delta_G})= c_G.$$
\item[(iv)] Assume that $\mathcal{H}$ is a $d$-uniform hypergraph such that $c_{\mathcal{H}}=m_{\mathcal{H}}$ and for each distinct edges $E$ and $E'$, $E\cap E'\neq \emptyset$ implies that $|E\cap E'|=d-1$. Then
$$\T{reg}(R/I_{\Delta_{\mathcal{H}}})= c_{\mathcal{H}}=c'_{\mathcal{H}}=m_{\mathcal{H}}.$$
\end{itemize}
\end{corollary}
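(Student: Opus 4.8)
The plan is to derive all four parts by assembling inequalities that are already available, using the lower bound $c_{\mathcal{H}}\leq \T{reg}(R/I_{\Delta_{\mathcal{H}}})$ from \cite[Corollary 3.9(a)]{MVi} together with the upper bounds proved above, and then letting the hypothesized equality in each part force equality throughout a chain. For a $(C_2,C_5)$-free vertex decomposable hypergraph the relevant chain is
$$c_{\mathcal{H}}\leq \T{reg}(R/I_{\Delta_{\mathcal{H}}})\leq c'_{\mathcal{H}}\leq \T{dim}(\Delta_{\mathcal{H}})+1,$$
where the first inequality is \cite[Corollary 3.9(a)]{MVi}, the second is Theorem \ref{reg}, and the last two come from Theorem \ref{dim}.

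For part (ii) I would simply read off this chain: the hypothesis $c_{\mathcal{H}}=c'_{\mathcal{H}}$ makes the two outer terms coincide, so $\T{reg}(R/I_{\Delta_{\mathcal{H}}})$ is squeezed between equal quantities and equals $c_{\mathcal{H}}$. For part (i), the hypothesis $c_{\mathcal{H}}=\T{dim}(\Delta_{\mathcal{H}})+1$ forces the first and last terms to agree, so every link of the chain becomes an equality, yielding $\T{reg}(R/I_{\Delta_{\mathcal{H}}})=c_{\mathcal{H}}=c'_{\mathcal{H}}=\T{dim}(\Delta_{\mathcal{H}})+1$.

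For part (iii) I would deduce the statement from part (ii). A simple graph is $2$-uniform, and any two of its distinct edges meet in at most one vertex; hence it is automatically $C_2$-free, and whenever two distinct edges do meet they share exactly one vertex $=d-1$ with $d=2$, so Proposition \ref{MH} applies and gives $c_G=c'_G$. Thus a $C_5$-free vertex decomposable graph $G$ is $(C_2,C_5)$-free and satisfies $c_G=c'_G$, whence part (ii) gives $\T{reg}(R/I_{\Delta_G})=c_G$. For part (iv) I would instead invoke Remark \ref{mH}, which for a $d$-uniform hypergraph with the stated intersection property supplies $c_{\mathcal{H}}\leq \T{reg}(R/I_{\Delta_{\mathcal{H}}})\leq m_{\mathcal{H}}$; Proposition \ref{MH} gives $c_{\mathcal{H}}=c'_{\mathcal{H}}$, and the hypothesis $c_{\mathcal{H}}=m_{\mathcal{H}}$ collapses this chain to $\T{reg}(R/I_{\Delta_{\mathcal{H}}})=c_{\mathcal{H}}=c'_{\mathcal{H}}=m_{\mathcal{H}}$.

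There is no genuine obstacle here, since the corollary is purely a matter of chaining previously established inequalities; the only points requiring care are the bookkeeping of which cited result justifies each link and the small verification in part (iii) that a simple graph automatically satisfies both $C_2$-freeness and the intersection hypothesis of Proposition \ref{MH}. I would also flag that part (iv) uses neither Theorem \ref{reg} nor the $(C_2,C_5)$-free/vertex decomposable hypotheses, resting only on Remark \ref{mH} and Proposition \ref{MH}, whereas parts (i)--(iii) depend on Theorem \ref{reg}.
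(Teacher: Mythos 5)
Your proposal is correct and matches the paper's intended argument: the paper states this corollary as an immediate consequence of \cite[Corollary 3.9(a)]{MVi}, Proposition \ref{MH}, Remark \ref{mH}, and Theorems \ref{dim} and \ref{reg}, which is exactly the chain of inequalities you assemble (including deriving (iii) from (ii) via $c_G=c'_G$ and handling (iv) through Remark \ref{mH} alone). Your closing observation that (iv) bypasses Theorem \ref{reg} entirely is also consistent with how the paper cites its ingredients.
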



\providecommand{\bysame}{\leavevmode\hbox
to3em{\hrulefill}\thinspace}

\end{document}